\tikzset{Box/.style={very thick, rounded corners}}
\tikzset{marked/.style={star, star point height = .75mm, star points =5, fill=black,minimum size=2mm, inner sep=0mm} }
\tikzset{verythickline/.style = {line width=7pt}}
\tikzset{thickline/.style = {line width=5pt}}
\tikzset{medthick/.style = {line width=3pt}}
\tikzset{med/.style = {line width=2pt}}
\tikzset{count/.style = {fill=white,circle,draw,thin, inner sep=2pt}}
\tikzset{rcount/.style = {fill=white,rectangle,draw,thin,inner sep=2pt, rounded corners}}
\tikzset{cpr/.style = {draw,fill=white,rectangle,thin, rounded corners}}
\newtheorem{thm}{Theorem}[section]
\theoremstyle{definition}
\newtheorem{defn}[thm]{Definition}
\newtheorem{exam}[thm]{Example}
\newcommand{\bC}{{\mathbb{C}}}
\newcommand{\bN}{{\mathbb{N}}}
\newcommand{\bR}{{\mathbb{R}}}
\newcommand{\bT}{{\mathbb{T}}}
\newcommand{\bZ}{{\mathbb{Z}}}
\newcommand{\A}{{\mathcal{A}}}
\newcommand{\B}{{\mathcal{B}}}
\newcommand{\F}{{\mathcal{F}}}
\renewcommand{\H}{{\mathcal{H}}}
\renewcommand{\phi}{\varphi}
\newcommand{\fM}{{\mathfrak{M}}}
\newcommand{\fN}{{\mathfrak{N}}}
\newcommand{\alg}{\mathrm{alg}}
\newcommand{\qand}{\quad\text{and}\quad}
\author{Paul Skoufranis}
\address{Department of Mathematics and Statistics, York University, 4700 Keele Street, Toronto, Ontario, M3J 1P3, Canada}
\email{pskoufra@yorku.ca}
\subjclass[2010]{46L53, 46L54}
\date{\today}
\keywords{Non-commutative stochastic processes, bi-free probability, transition operators}
\thanks{This research was completed with the support of NSERC (Canada) grant RGPIN-2017-05711.}
\begin{document}

\nocite{*}

\title{Non-Commutative Stochastic Processes and Bi-Free Probability}

\begin{abstract}
In this paper, a connection between bi-free probability and the theory of non-commutative stochastic processes is examined.  Specifically it is demonstrated that the transition operators for non-commutative stochastic processes can be modelled using technology from bi-free probability.  Several important examples are recovered with this approach and new formula are obtained for processes with free increments.  The benefits of this approach are also discussed.
\end{abstract}

\maketitle

\section{Introduction}

Non-commutative stochastic processes are continua of operators sitting inside non-commutative probability spaces.  Such processes can be used to model several phenomena in free probability and quantum information theory (see \cites{B1998, BKS1997}).  More recently, Voiculescu in \cite{V2014} introduced the notion of bi-free independence to simultaneously model the left and right actions of operators on reduced free product space.  Much of the terminology used in \cite{V2014} was based on the idea that the left action would model the past, the right action would model the future, and the theory of bi-free probability would act as a transition between the past and the future.  The goal of this paper is to realize this idea by using bi-free probability to model the transition between different times in non-commutative stochastic processes.

After reviewing the basics and transformations of bi-free probability in Section \ref{sec:bi-free} and some terminology on non-commutative stochastic processes in Section \ref{sec:stochastic}, Sections \ref{sec:self-adjoint} and \ref{sec:unitary} will examine the relation between bi-free probability and transition operators for self-adjoint and unitary non-commutative stochastic processes respectively.  Transition operators play a fundamental role in determining the progression from one point in a non-commutative stochastic process to another and dictate the expectations of the future onto the past.  It will be demonstrated that the transition operators of free Gaussian Markov processes are modelled by the bi-free Gaussian distributions (Example \ref{exam:bi-free-central-limit}) and free Poisson processes are modelled by bi-free Poisson distributions (Example \ref{exam:poisson}).  New formula to determine the transition operators for non-commutative stochastic process with freely independent increments will be examined (Theorems \ref{thm:free-additive-increments-Cauchy-transform-formula} and \ref{thm:free-multiplicative-increments-Cauchy-transform-formula}) and applied recovering some common examples.  Furthermore, the role of bi-free probability in studying non-commutative stochastic processes where the increments are not freely independent will be examined and a ``central limit'' type theorem will be developed for adding non-commutative stochastic processes (Theorem \ref{thm:central}).

\section{Bi-Free Probability Background}
\label{sec:bi-free}

In this section, we briefly recall some of the basic concepts in bi-free probability theory from \cites{CNS2015-2, GHM2016, HHW2018, HW2016, HW2018, S2016-2, S2018, V2014, V2016-2, BBGS2018, S2016-1, CNS2015-1, V2016-2}.

\begin{defn}[\cite{V2014}]
Let $(\A, \varphi)$ be a non-commutative probability space (i.e. $\A$ a unital C$^*$-algebra and $\varphi : \A \to \bC$ unital positive linear functional).  Pairs of unital $*$-subalgebras $(A_{\ell,1}, A_{r, 1})$ and $(A_{\ell,2}, A_{r, 2})$ of $\A$ are said to be \emph{bi-freely independent with respect to $\varphi$} if for $k \in \{1,2\}$ there exist Hilbert spaces $\H_k$, unit vectors $\xi_k \in \H_k$, and unital *-homomorphisms $\alpha_k : A_{\ell, k} \to \mathcal{B}(\H_k)$ and $\beta_k : A_{r,k} \to \mathcal{B}(\H_k)$ such that the following diagram commutes
\[
\begin{tikzpicture}[baseline]
\tikzset{
    myarrow/.style={->, >=latex', shorten >=1pt, thick},
} 
			\node[above] at (0,1.5) {$A_{\ell,1} \ast A_{r,1} \ast A_{\ell,2} \ast A_{r,2}$};
			\node[above] at (4.5,1.5) {$\A$};
			\node[above] at (7,1.5) {$\bC$};
			\node[below] at (0,0) {$\mathcal{B}(\H_1) \ast \mathcal{B}(\H_1) \ast \mathcal{B}(\H_2) \ast \mathcal{B}(\H_2)$};
			\node[below] at (7,0) {$\mathcal{B}(\H_1 \ast \H_2)$};
			\draw[myarrow] (0, 1.5) -- (0,0);
			\node[right] at (0,.75) {$\alpha_1 \ast \beta_1 \ast \alpha_2 \ast \beta_2$};
			\draw[myarrow] (2, 1.8) -- (3.95,1.8);
			\node[above] at (3,1.8) {$i$};
			\draw[myarrow] (2.75, -.3) -- (6,-.3);
			\node[above] at (4.35,-.3) {$\lambda_1 \ast \rho_1 \ast \lambda_2 \ast \rho_2$};
			\draw[myarrow] (5.05, 1.8) -- (6.75,1.8);
			\node[above] at (5.75, 1.8) {$\varphi$};
			\draw[myarrow] (7, 0) -- (7,1.5);
			\node[left] at (7, .75) {$\varphi_*$};
		    \end{tikzpicture}
\]
where $i$ is the inclusion map, $\H_1 \ast \H_2$ is the reduced free product of Hilbert spaces with respect to the vectors $\xi_1$ and $\xi_2$, $\varphi_*$ is the vector state on $\H_1 \ast \H_2$ induced by the vacuum vector, $\lambda_k$ is the left action of $\H_k$ on $\H_1 \ast \H_2$, $\rho_k$ is the right action of $\H_k$ on $\H_1 \ast \H_2$, and the free product of *-algebras are as normal.

A pair $(X, Y)$ is used to denote two operators $X, Y \in \A$ where $X$ is viewed as a left operator and $Y$ is viewed as a right operator in terms of the above definition.  Thus two pairs $(X_1, Y_1)$ and $(X_2, Y_2)$ are said to be bi-freely independent if $(*\text{-}\alg(X_1), *\text{-}\alg(Y_1))$ and $(*\text{-}\alg(X_2), *\text{-}\alg(Y_2))$ are bi-freely independent.
\end{defn}

It is elementary to see that if $(X_1, Y_1)$ and $(X_2, Y_2)$ are bi-freely independent with respect to $\varphi$, then $X_1$ and $X_2$ are freely independent with respect to  $\varphi$, $Y_1$ and $Y_2$ are freely independent with respect to $\varphi$, $X_1$ and $Y_2$ are classically independent with respect to $\varphi$ (i.e. $X_1$ and $Y_2$ commute in distribution and $\varphi(X_1^n Y_2^m) = \varphi(X_1^n) \varphi(Y_2^m)$ for all $n,m \in \bN$), and $X_2$ and $Y_1$ are classically independent with respect to $\varphi$.  The converse does not hold (see \cite{S2016-2}).  However, in the commutative case, it is possible to deduce bi-free independence from free independence in certain situations.

\begin{thm}[\cite{CNS2015-1}*{Theorem 10.2.1}]\label{thm:bi-free-from-free}
Let $\H$ be a Hilbert space, let $\xi \in \H$ be a unit vector, and let $\varphi$ be the the vector space corresponding to $\xi$.  Suppose $(A_{\ell,1}, A_{r, 1})$ and $(A_{\ell,2}, A_{r, 2})$ are pairs of unital $*$-algebras in $\B(\H)$ such that
\begin{enumerate}
\item $A_{\ell, n}$ and $A_{r,m}$ commute for all $n, m \in \{1,2\}$, and
\item for each $T \in A_{r,k}$ there exists an $S \in A_{\ell,k}$ such that $T \xi = S \xi$ for all $k \in \{1,2\}$.
\end{enumerate}
Then $(A_{\ell,1}, A_{r, 1})$ and $(A_{\ell,2}, A_{r, 2})$ are bi-freely independent with respect to $\varphi$ if and only if $A_1$ and $A_2$ are freely independent with respect to $\varphi$.
\end{thm}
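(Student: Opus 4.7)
The forward direction is straightforward by restricting the bi-free diagram to the joint algebras and invoking the commutation in (1), so the real content lies in the reverse implication. The strategy for the reverse direction is to construct the bi-free data $(\H_k, \xi_k, \alpha_k, \beta_k)_{k=1,2}$ explicitly from the ambient representation on $\H$ and then verify the commutative diagram via two intertwining relations.

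Write $A_k := *\text{-}\alg(A_{\ell,k} \cup A_{r,k})$. By (1) one has $A_k = A_{\ell,k} A_{r,k}$, and by (2), $A_{r,k}\xi \subseteq A_{\ell,k}\xi$, so $A_k\xi = A_{\ell,k}\xi$. Take $\H_k := \overline{A_{\ell,k}\xi} \subseteq \H$, $\xi_k := \xi$, and define $\alpha_k : A_{\ell,k} \to \B(\H_k)$ and $\beta_k : A_{r,k} \to \B(\H_k)$ by restriction (well-defined since $\H_k$ is stable under both algebras). Free independence of $A_1$ and $A_2$ with respect to $\varphi$ then furnishes, via the standard identification of the cyclic GNS subspace with the free product of GNS spaces, an isometry $W : \H_1 * \H_2 \to \H$ with $W\Omega = \xi$ ($\Omega$ the vacuum) and
\[
W(\tilde{s}_1\xi \otimes \cdots \otimes \tilde{s}_n\xi) = \tilde{s}_1\tilde{s}_2\cdots\tilde{s}_n\xi
\]
for alternating tuples $\tilde{s}_j \in A_{\varepsilon_j}^\circ$ with $\varepsilon_j \neq \varepsilon_{j+1}$. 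A crucial consequence of (2) is that the representatives $\tilde{s}_j$ may always be chosen in $A_{\ell,\varepsilon_j}^\circ$ (possibly after passing to limits).

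The heart of the proof is to verify the two intertwining relations
\[
aW = W\lambda_k(\alpha_k(a)) \quad (a \in A_{\ell,k}) \qquad \text{and} \qquad bW = W\rho_k(\beta_k(b)) \quad (b \in A_{r,k}).
\]
The first is immediate from the formula defining $W$ and the definition of $\lambda_k$ on the free product. The second is where the two hypotheses combine nonobviously, and is the main technical obstacle: starting with $bW(\tilde{s}_1\xi \otimes \cdots \otimes \tilde{s}_n\xi) = b\tilde{s}_1\cdots\tilde{s}_n\xi$ where every $\tilde{s}_j \in A_{\ell,\varepsilon_j}^\circ$, condition (1) lets $b$ commute past each $\tilde{s}_j$ to give $\tilde{s}_1\cdots\tilde{s}_n b\xi$; condition (2) then rewrites $b\xi = s\xi$ for some $s \in A_{\ell,k}$ (necessarily with $\varphi(s) = \varphi(b)$). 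A case split on whether $\varepsilon_n = k$, combined with decomposing the rightmost operator ($\tilde{s}_n s$ when $\varepsilon_n = k$, or just $s$ when $\varepsilon_n \neq k$) into its scalar and mean-zero parts in $A_{\ell,k}$, matches the result term-by-term with the explicit formula for $\rho_k(\beta_k(b))$ acting on an alternating tensor. The delicate role of (2) is that (1) only guarantees commutation of $A_{r,k}$ with $A_{\ell,\varepsilon_j}$, not with $A_{r,\varepsilon_j}$ for $\varepsilon_j \neq k$, so representing each $\tilde{s}_j$ as a left operator is precisely what makes the commutation step legitimate.

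Once both intertwiners are established, the commutative diagram is immediate: for any word $w_1\cdots w_n$ in $A_{\ell,1} * A_{r,1} * A_{\ell,2} * A_{r,2}$,
\[
\varphi(w_1\cdots w_n) = \langle w_1\cdots w_n W\Omega, W\Omega\rangle = \langle W\Lambda_1\cdots\Lambda_n\Omega, W\Omega\rangle = \varphi_*(\Lambda_1\cdots\Lambda_n),
\]
where each $\Lambda_j$ is $\lambda_{k_j}(\alpha_{k_j}(w_j))$ or $\rho_{k_j}(\beta_{k_j}(w_j))$ according to whether $w_j$ lies in a left or right algebra. This establishes bi-free independence.
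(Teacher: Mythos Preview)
The paper does not contain its own proof of this statement: Theorem \ref{thm:bi-free-from-free} is quoted directly from \cite{CNS2015-1}*{Theorem 10.2.1} and used as a black box throughout Sections \ref{sec:self-adjoint} and \ref{sec:unitary}. There is therefore no in-paper argument against which to compare your proposal.

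That said, your outline is a sound direct construction and is in the spirit of the original source. The reverse implication is handled correctly: taking $\H_k=\overline{A_{\ell,k}\xi}$ (which equals $\overline{A_k\xi}$ by (1) and (2)), the free-product isometry $W$ exists by freeness of $A_{\ell,1}$ and $A_{\ell,2}$, the left intertwiner $aW=W\lambda_k(\alpha_k(a))$ is the standard free-product fact, and your case analysis for the right intertwiner $bW=W\rho_k(\beta_k(b))$ is exactly the place where (1) and (2) combine --- (1) to push $b\in A_{r,k}$ past the alternating \emph{left} operators $\tilde s_j$, and (2) to convert $b\xi$ back into $s\xi$ with $s\in A_{\ell,k}$ so that the result matches the explicit formula for $\rho_k$ on the last tensor factor. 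Your remark that the $\tilde s_j$ must be chosen as left operators for the commutation step to be legitimate is precisely the point.

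For the forward direction your one-line dismissal is justified, but it is worth recording why: bi-free independence always forces $A_{\ell,1}$ and $A_{\ell,2}$ to be free (left actions on a reduced free product are free), and under (1) and (2) every $T\in A_k=A_{\ell,k}A_{r,k}$ satisfies $T\xi\in A_{\ell,k}\xi$, so every alternating $\varphi$-moment in $A_1,A_2$ reduces to one in $A_{\ell,1},A_{\ell,2}$. This makes the forward direction a two-line consequence rather than a diagram chase.
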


Similar to the theory of free cumulants via non-crossing partitions from \cite{S1994}, \cite{V2016-2} introduced the bi-free cumulants and \cite{CNS2015-2} extended the notion of bi-free cumulants using bi-non-crossing partitions.  As this paper will only make use of some elementary facts pertaining to the bi-free cumulants, we highlight that which is required here.

Given a pair $(Z_\ell, Z_r)$ in a non-commutative probability space $(\A, \varphi)$, for all $n \in \bN$ and maps $\chi : \{1,\ldots, n\} \to \{\ell, r\}$, there is a bi-free cumulant corresponding to $\chi$ denoted
\[
\kappa_\chi(Z_{\chi(1)}, \ldots, Z_{\chi(n)}).
\]
In the case that $Z_\ell$ and $Z_r$ commute, all joint moments of $(Z_\ell, Z_r)$ can be reduced to a moment of the form $\varphi(Z_\ell^n Z_r^m)$ for $n,m \in \bN$ and thus \cite{CNS2015-2} implies only cumulants for $\chi : \{1,\ldots, n+m\} \to \{\ell, r\}$ where there exists a $k$ such that $\chi(k) = \ell$ if $k \leq n$ and $\chi(k) = r$ if $k > n$ matter in the moment-cumulant formula.  Consequently, for such $\chi$, we simplify notation to
\[
\kappa_\chi(Z_{\chi(1)}, \ldots, Z_{\chi(n)}) = \kappa_{n,m}(\underbrace{Z_{\ell}, \ldots, Z_{\ell}}_{n \text{ times}}, \underbrace{Z_r, \ldots, Z_r}_{m \text{ times}}) = \kappa_{n,m}(Z_\ell, Z_r).
\]
From this and the structure of bi-non-crossing partitions, it is elementary to show that
\begin{align}
\kappa_{n,m}(Z_\ell, Z_r) = \kappa_{n+m}(Z_\ell, \ldots, Z_\ell, Z_r, \ldots, Z_r) \label{eq:bi-free-to-free-cumu}
\end{align}
where $\kappa_{n+m}$ denotes the $(n+m)^{\text{th}}$ free cumulant function.  It was also shown in \cite{CNS2015-2} that if $(Z_\ell, Z_r)$ and $(Z'_\ell, Z'_r)$ are bi-freely independent, then mixed bi-free cumulants from the pairs vanish and thus
\begin{align*}
\kappa_{n,m}&(Z_\ell + Z'_\ell, Z_r+ Z'_r) = \kappa_{n,m}(Z_\ell,   Z_r) + \kappa_{n,m}(Z'_\ell,   Z'_r).
\end{align*}

\subsection*{Additive Transformations}
\label{subsec:add-trans}

Of great use in this paper are the bi-free transformations that are used to study commuting pairs of self-adjoint operators and commuting pairs of unitary operators. Since bi-free independence subsumes free independence, the transformations used in free probability are also of use.  We begin with the setting of self-adjoint operators and additive convolution (where free or bi-free independent objects are examined).  

Let $X$ be a self-adjoint operator in a non-commutative probability space $(\A, \varphi)$ and suppose the distribution measure of $X$ with respect to $\varphi$ is $\mu_X$.  The \emph{Cauchy transform of $X$ with respect to $\varphi$} is defined by
\[
G_X(z) = \varphi((z-X)^{-1}) = \frac{1}{z} + \sum_{n\geq 1} \frac{\varphi(X^n)}{z^{n+1}} = \int_\bR \frac{1}{z-x} \, d\mu_X(x)
\]
for all $z \in \bC \setminus \bR$.  It is well known that if $\bC^+$ and $\bC^-$ denote the upper and lower half planes respectively, then $G_X(\bC^+) \subseteq \bC^-$ and $G_X(\bC^-) \subseteq \bC^+$.  Moreover, if $X$ has density $f_X$ so that $d\mu_X(x) = f_X(x) \, dx$, then $f_X$ can be recovered from $G_X$ via the formula
\begin{align}
f_X(x) = \lim_{\epsilon \searrow 0} -\frac{1}{\pi} \Im\left( G_X(x + i \epsilon)\right).  \label{eq:cauchy-inversion}
\end{align}

Recall the \emph{R-transform of $X$} is the power series that can be defined by
\[
R_X(z) = \sum_{n\geq 0} \kappa_{n+1}(X) z^n
\]
where $\kappa_{n}(X)$ is the $n^{\mathrm{th}}$ free cumulant of $X$.  Equivalently, if one defines
\[
K_X(z) = \frac{1}{z} + R_X(z),
\]
then $K_X$ is the unique function such that
\begin{align}
G_X(K_X(z)) = z = K_X(G_X(z)) \label{eq:cauchy-r-inversion}
\end{align}
for all $z \in \bC \setminus \bR$.  Thus $K_X(\bC^+) \subseteq \bC^-$ and $K_X(\bC^-) \subseteq \bC^+$.

One important use of the R-transform is that it completely determines the distribution of the free additive convolution of two self-adjoint operators via their individual distributions.  Indeed if $X$ and $X'$ are freely independent self-adjoint operators with respect to $\varphi$, then
\begin{align}
R_{X + X'}(z) = R_X(z) + R_{X'}(z). \label{eq:free-R}
\end{align}
Since $R_X$ and $R_{X'}$ are determined by $G_X$ and $G_{X'}$ respectively, and since $G_{X+X'}$ is determined by $R_{X + X'}$, we see that $G_{X+X'}$ is determined by $G_X$ and $G_{X'}$ (and thus $\mu_X$ and $\mu_{X'}$) so that $\mu_{X+X'}$ can be computed via (\ref{eq:cauchy-inversion}).

Related to computing $\mu_{X+X'}$ are the so-called subordination functions.  Indeed if
\[
\omega_X(z) = K_X(G_{X+X'}(z)) \qand \omega_{X'}(z) = K_{X'}(G_{X+X'}(z))
\]
then
\begin{align}
\omega_{X}(z) + \omega_{X'}(z) - z = \frac{1}{G_{X+X'}(z)} = \frac{1}{G_X(\omega_X(z))} = \frac{1}{G_{X'}(\omega_{X'}(z))}\label{eq:free-sub}
\end{align}
for all $z \in \bC\setminus \bR$ (see \cites{BB2007,BV1998}).

As per \cite{V2016-2}, given a pair of self-adjoint operators $(X, Y)$ in $(\A, \varphi)$, the \emph{two-variable Green's function} is defined by
\[
G_{X,Y}(z,w) = \varphi((z-X)^{-1}(w-Y)^{-1}) = \frac{1}{zw} + \sum_{\substack{n,m \geq 0 \\ n+m \geq 1}}\frac{\varphi(X^n Y^m)}{z^{n+1} w^{n+1}} 
\]
for all $(z,w) \in (\bC \setminus \bR)^2$.  If $X$ and $Y$ commute and thus have joint distribution $\mu_{X,Y}$ on $\bR^2$, then
\[
G_{X,Y}(z,w) = \int_{\bR^2} \frac{1}{z-x} \frac{1}{w-y} \, d\mu_{X,Y}(x,y).
\]
Moreover \cite{HW2016} implies that if the pair $(X, Y)$ has density $f_{X,Y}$ so that $d\mu_{X,Y}(x) = f_{X,Y}(x) \, dx\, dy$, then $f_{X,Y}$ can be recovered from $G_{X,Y}$ via the formula
\begin{align}
f_{X,Y}(x,y) = \lim_{\epsilon \searrow 0}\frac{1}{\pi^2} \Im\left(\frac{G_{X,Y}(x+i\epsilon,y + i\epsilon) - G_{X,Y}(x+i\epsilon,y - i\epsilon)   }{2i}     \right). \label{eq:bi-cauchy-inversion}
\end{align}

As in \cite{V2016-2}, the \emph{bi-free partial R-transform of $(X,Y)$} is the power series that can be defined by
\begin{align*}
R_{X,Y}(z,w) &= 	\sum_{\substack{n,m \geq 0 \\ n+m \geq 1}} \kappa_{n,m}(X, Y) z^n w^m.
\end{align*}
As proved analytically in \cite{V2016-2} and combinatorially in \cite{S2016-2}, the relationship between the bi-free partial R-transform and the two-variable Green's function is given by
\[
R_{X,Y}(z,w) = 1 + z R_X(z) + w R_Y(w) - \frac{zw}{G_{X,Y}\left( K_X(z), K_Y(w)\right)}.
\]
As such the \emph{reduced bi-free partial R-transform of $(X,Y)$} is the power series defined by
\[
\tilde{R}_{X,Y}(z,w) = \sum_{n,m \geq 1} \kappa_{n,m}(X, Y) z^n w^m.
\]
Clearly
\[
R_{X,Y}(z,w)  = z R_X(z) + w R_Y(w) + \tilde{R}_{X,Y}(z,w)
\]
so that
\begin{align}
\tilde{R}_{X,Y}(z,w)  = 1 -\frac{zw}{G_{X,Y}\left( K_X(z), K_Y(w)\right)}.\label{eq:reduced-partial-R}
\end{align}

The reduced bi-free partial R-transform enables one to determine the distribution of the bi-free additive convolution of two pairs of commuting self-adjoint operators via their individual distributions.  Indeed if $(X,Y)$ and $(X',Y')$ are pairs of commuting self-adjoint operators that are bi-freely independent with respect to $\varphi$, then $(X + X', Y+Y')$ is a pair of commuting (note $X$ and $Y'$ need not commute but will commute in distribution which is enough) self-adjoint operators and thus
\begin{align}
R_{X + X', Y+Y'}(z,w) &= R_{X,Y}(z,w) + R_{X',Y'}(z,w) \text{ and} \nonumber \\
\tilde{R}_{X + X', Y+Y'}(z,w) &= \tilde{R}_{X,Y}(z,w) + \tilde{R}_{X',Y'}(z,w). \label{eq:bi-free-reduce-R-additive}
\end{align}
As per \cite{BBGS2018}, this implies that
\begin{align*}
&\frac{zw}{G_{X+X', Y+Y'}(K_{X+X'}(z),K_{Y+Y'}(w))} +1\\
&= \frac{zw}{G_{X,Y}(K_X(z), K_Y(w))}+ \frac{zw}{G_{X',Y'}(K_{X'}(z), K_{Y'}(w))}
\end{align*}
so that
\begin{align}
\frac{1}{G_{X+X', Y+Y'}(z,w)} &+ \frac{1}{G_{X+X'}(z) G_{Y+Y'}(w)} \nonumber  \\ 
&= \frac{1}{G_{X,Y}(\omega_X(z), \omega_Y(w))} + \frac{1}{G_{X',Y'}(\omega_{X'}(z), \omega_{Y'}(w))}. \label{eq:bi-free-addivite-subordination}
\end{align}
In particular, via (\ref{eq:bi-cauchy-inversion}), it is possible to describe $\mu_{X+X', Y+Y'}$ with knowledge of the individual distributions of $(X,Y)$ and $(X',Y')$.

\subsection*{Bi-Free Multiplicative Transformation}

To examine communing pairs of unitaries $(U, V)$ in a non-commutative probability space $(\A, \varphi)$, it is first helpful to recall the free transforms for (bi-)free multiplicative convolution.  There are many approaches to the free transformations (see \cites{V1987, NS1997} for example), we will follow the structure of \cite{HW2018} for simple relation to the bi-free transformations.  Although the (bi-)free transforms generalize to non-unitary operators (see \cites{S2016-1, S2018, V2016-2} for the bi-free results), we will focus on such transforms for unitary operators only.  

Let $U$ be a unitary operator in a non-commutative probability space $(\A, \varphi)$ such that $\varphi(U) \neq 0$ and suppose $\mu_U$ is the distribution of $U$ with respect to $\varphi$.  The \emph{$\psi$-transform of $U$} is defined by
\begin{align}
\psi_U(z) = \varphi(zU (1-zU)^{-1})  = \int_\bT \frac{zs}{1-zs} \, d\mu(s) \label{eq:psi}
\end{align}
for all $z \in \bC \setminus \bT$.  

Note if $U$ has density $f_U$ so that $d\mu_U(s) = f_U(s) \, ds$ for $s \in \bT$, then $f_U$ can be recovered from $\psi_U$.  Indeed
\begin{align}
\Re(2 \psi_U(z) + 1) = \int_\bT \Re\left(\frac{1+zs}{1-zs}\right) \, d\mu(s) \label{eq:poisson}
\end{align}
is the Poisson integral of the measure $d\mu(\frac{1}{s})$ thereby yielding $\mu$ (see \cite{R1969}).

The \emph{$\eta$-transform of $U$} is defined by
\[
\eta_U(z) = \frac{\psi_U(z)}{1 + \psi_U(z)}
\]
for all $z \in \bC \setminus \bT$.  Note $\eta_U$ is holomorphic on its domain and $\eta'_U(0) = \varphi(U) \neq 0$.  Thus $\eta_U$ is invertible near the origin.  Moreover $\psi_U$ (and thus $\mu_U$) can be recovered from $\eta_U$ via the formula
\begin{align}
\psi^{-1}_U(z) = \eta^{-1}_U\left( \frac{z}{1+z}\right). \label{eq:free-S-inv-1}
\end{align}

The \emph{S-transform of $U$} is defined by
\begin{align}
S_U(z) = \frac{1}{z} \eta^{-1}_U(z).\label{eq:free-S-inv-2}
\end{align}
The importance of the S-transform is that it completely determines the distribution of the free multiplicative convolution of two untiary operators with respect to their individual distributions.  Indeed if $U$ and $U'$ are freely independent unitary operators with respect to $\varphi$, then
\begin{align}
S_{UU'}(z) = S_U(z) S_{U'}(z) = S_{U'U}(z).\label{eq:free-S-inv-3}
\end{align}
Since $\mu_U$ and $\mu_{U'}$ determine $S_U$ and $S_{U'}$ respectively, and since $S_{UU'}(z)$ determines $\eta^{-1}_{UU'}$ and thus $\psi_{UU'}$, the distribution of $UU'$ can be recovered via the above transformations and equation (\ref{eq:poisson}).

As per \cite{HW2018}, given a pair of commuting unitary operators $(U, V)$ in $(\A, \varphi)$ with $\varphi(U) \neq 0$, $\varphi(V) \neq 0$, and joint distribution $\mu_{U, V}$, the \emph{two-variable $\psi$-transformation of $(U, V)$} is given by
\begin{align*}
\psi_{U,V}(z,w) &= \varphi\left((zU)(wV)(1-zU)^{-1}(1-wV)^{-1}\right) \\
&= \int_{\bT^2} \frac{zs}{1-zs} \frac{wt}{1-wt} \, d\mu_{U,V}(s,t)
\end{align*}
for all $(z,w) \in (\bC \setminus \bT)^2$.  By \cite{HW2018}, if
\begin{align}
g_{U,V}(z,w) &= 4 \psi_{U,V}(z,w) + 2(\psi_{U}(z) + \psi_{V}(w)) + 1 \nonumber\\
&= \int_{\bT^2} \frac{1+zs}{1-zs} \frac{1+wt}{1-wt} d\mu_{U,V}(s,t) \label{eq:g-for-U-inversion}
\end{align}
for all $(z,w) \in (\bC \setminus \bT)^2$, then
\begin{align}
\Re\left( \frac{g(z,w) - g(\frac{1}{\overline{z}},w)}{2}\right) = \int_{\bT^2} \Re\left(\frac{1+zs}{1-zs}\right) \Re\left( \frac{1+wt}{1-wt}\right) \, d\mu(s,t) \label{eq:bi-free-poisson}
\end{align}
will recover the Poisson integral of the measure $d\mu\left(1/s, 1/t\right)$ and thus $\mu$  (see \cite{R1969}).  

By defining
\begin{align}
H_{U,V}(z,w) &= \psi_{U,V}(z,w) + \psi_{U}(z) +  \psi_{V}(w) + 1 \nonumber \\
&= \int_{\bT^2} \frac{1}{(1-zs)(1-wt)} \, d\mu(s,t) \label{eq:H-definition}
\end{align}
which is well-defined in a neighbourhood of $(0,0)$, the \emph{opposite bi-free partial S-transform of $(U,V)$} was defined in \cite{HW2018} to be
\begin{align}
S^{\mathrm{op}}_{U,V}(z,w) = \frac{w(z+1)}{z(w+1)} \left(\frac{H_{U,V}(\psi^{-1}_{U}(z), \psi^{-1}_{V}(w)) - (w+1)}{H_{U,V}(\psi^{-1}_{U}(z), \psi^{-1}_{V}(w)) - (z+1)}\right). \label{eq:op-S-via-H}
\end{align}
The opposite bi-free partial S-transform is analytic in a neighbourhood of $(0,0)$. 

An alternative formulation of the opposite bi-free partial S-transform was provided in \cite{S2018}*{Proposition 2.5} (also see \cite{S2018}*{Remark 2.6}).  Indeed by defining
\[
K_{U,V}(z,w) = \sum_{n,m\geq 1} \kappa_{n,m}(U, V) z^n w^m,
\]
we have
\begin{align}
S^{\mathrm{op}}_{U,V}(z,w)  = \frac{1 + \frac{1}{z} K_{U,V}(z S_U(z), w S_V(w))}{1 + \frac{1}{w} K_{U,V}(z S_U(z), w S_V(w))} \label{eq:op-S-transform-via-cummulants}
\end{align}
in a neighbourhood of $(0,0)$.

If $(U', V')$ is another pair of commuting unitary operators in $(\A, \varphi)$ with $\varphi(U') \neq 0$ and $\varphi(V') \neq 0$ that is bi-free from $(U, V)$, then \cite{HW2018} proved analytically and \cite{S2018} proved combinatorially that
\begin{align}
S^{\mathrm{op}}_{UU', V'V}(z,w) = S^{\mathrm{op}}_{U,V}(z,w)S^{\mathrm{op}}_{U', V'}(z,w). \label{eq:op-S}
\end{align}
This formula in conjunction with equation (\ref{eq:bi-free-poisson}) enables one to compute $\mu_{UU', V'V}$ in terms of $\mu_{U,V}$ and $\mu_{U', V'}$.  Indeed since $U$ and $U'$ are freely independent and $V$ and $V'$ are freely independent, it is possible to compute $\mu_{UU'}$ and $\mu_{V'V}$ as above.  As equation (\ref{eq:op-S}) enables one to compute $S^{\mathrm{op}}_{(UU', V'V)}(z,w)$ and thus $H_{U,V}(z,w)$, it is possible to compute $\psi_{U,V}$ and thus $\mu_{U,V}$ via equation (\ref{eq:bi-free-poisson}).  Thus the algorithm for determining this bi-free multiplicative convolution is more complicated then the bi-free additive convolution.

It should be pointed out that the above is considered the `opposite' bi-free partial S-transform due to historical development.  Indeed \cite{V2016-3} proved analytically and \cite{S2016-1} proved combinatorially the existence of the bi-free partial S-transform $S_{U,V}(z,w)$ such that
\[
S_{UU', VV'}(z,w) = S_{U,V}(z,w)S_{U', V'}(z,w).
\]
It was shown in \cite{HW2018} that there is a relation between these two partial S-transforms provided unitary operators are being considered with the opposite bi-free partial S-transform being more useful there and in this paper.

\section{Non-Commutative Stochastic Processes Background}
\label{sec:stochastic}

In this section, we will introduce the concepts and terminology on non-commutative stochastic processes required in the subsequent sections.  

\begin{defn}
Let $(\A, \varphi)$ be a non-commutative probability space.  A \emph{non-commutative stochastic process} is a collection $(Z_t)_{t \in T}$ of elements in $\fM$.  The index set $T$ is considered a time parameter.  A non-commutative stochastic process $(Z_t)_{t \in T}$ is said to be
\begin{itemize}
\item \emph{self-adjoint} if $Z_t$ is self-adjoint for all $t \in T$.
\item \emph{unitary} if $Z_t$ is unitary for all $t \in T$.
\end{itemize}
\end{defn}

Of particular interest in free probability are the following types of non-commutative processes.

\begin{defn}[\cite{B1998}]
A self-adjoint non-commutative stochastic process $(X_t)_{t \in T}$ in a non-commutative probability space $(\A, \varphi)$ is said to have \emph{freely additive increments} if for all $t_1 < t_2 < \cdots < t_n$ in $T$, the operators $X_{t_1}, X_{t_2} - X_{t_1}, \ldots, X_{t_n} - X_{t_n-1}$ are freely independent.
\end{defn}

\begin{defn}[\cite{B1998}]
A unitary non-commutative stochastic process $(U_t)_{t \in T}$ in a non-commutative probability space $(\A, \varphi)$ is said to have \emph{(left) multiplicatively free increments} if for all $t_1 < t_2 < \cdots < t_n$ in $T$, the operators $U_{t_1}, U_{t_2}U^{-1}_{t_1}, \ldots, U_{t_n}U^{-1}_{t_{n-1}}$ are freely independent.
\end{defn}

This paper will mainly focus on the transitioning between different operators in a non-commutative stochastic process.  In particular, given a non-commutative stochastic process $(Z_t)_{t \in T}$, we will consider $Z_\ell$ and $Z_r$ where $\ell,r \in T$ are such that $\ell \leq r$.  In the bi-free terminology, `left operators' will be used to model the past whereas `right operators' will be used to model the future.  To study such transitions, the following terminology is useful.

\begin{defn}
Let $(Z_t)_{t \in T}$ be a non-commutative stochastic process in tracial von Neumann algebra $(\fM, \tau)$.  Denote
\begin{align*}
\fM_{t]} &= W^*(\{Z_\ell \, \mid \, \ell \leq t\}) \subseteq \fM\\
\fM_{[t]} &= W^*(\{Z_t\}) \subseteq \fM\\
\fM_{[t} &= W^*(\{Z_r \, \mid \, r \geq t\}) \subseteq \fM.
\end{align*}
It is said that $(Z_t)_{t \in T}$ is a \emph{Markov process} if for all $\ell,r \in T$ with $\ell \leq r$ we have
\[
E_{\fM_{\ell]}}(A) \in \fM_{[\ell]} \quad \text{for all }A \in \fM_{[r]}.
\]
\end{defn}

\begin{defn}
Let $(Z_t)_{t \in T}$ be a self-adjoint or unitary non-commutative stochastic process in a tracial von Neumann algebra $(\fM, \tau)$.  For $t \in T$, let $\mu_t$ be the distribution of $X_t$.  Note $W^*(Z_t)$ is isomorphic to $L_\infty(\mu_t)$.

For $\ell,r \in T$ with $\ell \leq r$, an operator $K_{\ell,r} : L_\infty(\mu_r) \to L_\infty(\mu_\ell)$ determined by
\[
E_{\fM_{[\ell]}}(h(Z_r)) = (K_{\ell,r}(h)) (Z_\ell)
\]
for all $h \in L_\infty(\mu_r)$ is called a \emph{transition operator} of the process $(Z_t)_{t \in T}$.
\end{defn}

If $(Z_t)_{t \in T}$ is a Markov process then all of the transition operators exist (see \cite{BKS1997} for example) and 
\[
E_{\fM_{\ell]}}(h(Z_r)) = E_{\fM_{[\ell]}}(h(Z_r)) = (K_{\ell,r}(h)) (Z_\ell)
\]
for all $\ell \leq r$ and $h \in L_\infty(\mu_r)$.  Furthermore, \cite{B1998} showed the existence of transition operators for all self-adjoint non-commutative stochastic processes with additively free increments and for all all unitary non-commutative stochastic processes with multiplicatively free increments.

\section{Self-Adjoint Non-Commutative Stochastic Processes}
\label{sec:self-adjoint}

In this section we will examine the relation between bi-free probability theory and the transition operators of self-adjoint non-commutative stochastic processes and, more generally, the expectation of a von Neumann algebra generated by a single self-adjoint operator onto another von Neumann algebra generated by a single self-adjoint operator.  Our initial discussions will also consider unitary operators which will be examined further in the next section.

Let $(\fM, \tau)$ be a tracial von Neumann algebra and let $X_\ell, X_r \in \fM$ be self-adjoint (respectively unitary) operators.  If $\mu_\ell$ and $\mu_r$ are the distribution measures of $X_\ell$ and $X_r$ with respect to $\tau$ respectively, then $W^*(X_\ell) \cong L_\infty(\mu_\ell)$ and $W^*(X_r) \cong L_\infty(\mu_r)$.  If $E : \fM \to W^*(X_\ell) $ is the trace-preserving conditional expectation of $\fM$ onto $W^*(X_\ell)$, then, by standard von Neumann algebra theory, for all $S \in \fM$ the value of $E(S)$ is determined by the values of
\[
\tau(E(S) X_\ell^n) = \tau(SX_\ell^n) = \tau(X_\ell^n S)
\]
for all $n \in \bN \cup \{0\}$ (respectively $n \in \bZ$). Thus, the restriction of $E$ to $W^*(X_r) \cong L_\infty(\mu_r)$ yields a transition operator $K_{\ell,r} : L_\infty(\mu_r) \to L_\infty(\mu_\ell)$ determined by
\[
E(h(X_r)) = (K_{\ell,r}(h)) (X_\ell)
\]
for all $h \in L_\infty(\mu_r)$.

Consider the GNS Hilbert space $L_2(\fM, \tau)$ generated by $(\fM, \tau)$, let $\xi = 1_\fM \in L_2(\fM, \tau)$, and for $S \in \fM$ let $L(S)$ and $R(S)$ denote the left and right actions of $S$ on $L_2(\fM, \tau)$ respectively.  Note for all $n,m \in \bN \cup \{0\}$ (respectively $n,m \in \bZ$) that
\[
\tau(X_\ell^n X_r^m) = \langle L(X_\ell)^n R(X_r)^n \xi, \xi\rangle.
\]
As this vector state is positive and $L(X_\ell)$ and $R(X_r)$ commute, there exists a probability measure $\mu_{\ell,r}$ such that if $\Omega = \bR$ (respectively $\Omega = \bT$) then
\[
\int_{\Omega^2} x^n y^m \, d\mu_{\ell,r}(x,y) = \tau(X_\ell^n X_r^m)
\]
for all $n,m \in \bN \cup \{0\}$ (respectively $n \in \bZ$).

Clearly $\mu_{\ell, r}$ completely determines the transition operator $K_{\ell, r}$.  In particular, if $\mu_{\ell,r}$ is absolutely continuous with respect to the two-dimensional Lebesgue measure and thus can be written as $f_{\ell,r}(x,y) \, dx \, dy$,  then
\[
f_\ell(x) = \int_\Omega f_{\ell,r}(x,y) \, dy
\]
is the density function of $X_\ell$ and the transition operator $K_{\ell,r} : L_\infty(\mu_r) \to L_\infty(\mu_\ell)$ is obtained via
\[
(K_{\ell,r}(h))(x) = \int_\Omega h(y) k_{\ell,r}(x,dy)
\]
where
\begin{align}
k_{\ell,r}(x, dy) = \frac{f_{\ell,r}(x,y)}{f_\ell(x)}  \, dy.\label{eq:transition-op-from-2-d-distribution}
\end{align}

Using bi-free probability theory and the above, the transition operators of some common non-commutative stochastic processes become easy to describe and, interestingly, are related to the bi-free generalizations of such objects.

\begin{exam}\label{exam:bi-free-central-limit}
Let $\H$ be a real Hilbert space, let $\H_\bC$ be the complexification of $\H$, let $\F(\H_\bC)$ denoted the Fock space associated to $\H_\bC$, and let $\tau : \B(\F(\H_\bC)) \to \bC$ denote the vacuum vector state.  For an index set $T$, let $(f_t)_{t \in T}$ be a set of vectors in $\H$.  As per \cite{BKS1997}, a \emph{free (centred) Gaussian Markov process} is the self-adjoint non-commutative stochastic process $(X_t)_{t \in T}$ where
\[
X_t = l(f_t) + l^*(f_t)
\]
for all $t \in T$ where $l$ and $l^*$ are the left creation and annihilation  operators on $\F(\H_\bC)$ respectively.  Moreover \cite{BKS1997}*{Remark 3.4} implies that free Gaussian Markov processes depend only on the covariance function $c : T \times T \to \bR$ defined by
\[
c(\ell, r) = \langle f_\ell, f_r\rangle
\]
and does not depend on the choice of $\H$ nor the choice of $(f_t)_{t \in T}$.  Some examples of free Gaussian Markov processes include the free Brownian motion whose covariance function is given by $c(\ell, r) = \min(\ell, r)$ with $T = [0, \infty)$, the free Brownian bridge whose covariance function is given by $c(\ell, r) = \ell(1-r)$ for $\ell \leq r$ with $T = [0,1]$, and the free Ornstein-Uhlenbeck process whose covariance function is given by $c(\ell, r) = e^{-|\ell-r|}$ with $T = \bR$.

In \cite{BKS1997}*{Theorem 4.6} (also see \cite{B1998}*{5.3}) the transition operators for the free (centred) Gaussian Markov process were described.  Using standard bi-free probability theory, we obtain a connection between the transition operators of these free Gaussian Markov processes and the bi-free Gaussian distributions.  Indeed for $\ell, r \in T$ with $\ell < r$, note the the joint distribution of $L(X_\ell)$ and $R(X_r)$ is precisely the distribution of the pair
\[
(l(f_\ell) + l^*(f_\ell), r(f_r) + r^*(f_r))
\]
where $r$ and $r^*$ are the right creation and annihilation operators on $\F(\H_\bC)$ respectively.  This pair is a centred self-adjoint bi-free central limit pair by \cite{V2014}*{Theorem 7.6} with bi-free partial and reduced bi-free partial R-transforms given by
\begin{align*}
R_{X_\ell, X_r}(z,w) &= c(\ell, \ell) z^2 + c(\ell, r) zw + c(r,r) w^2\\
\tilde{R}_{X_\ell, X_r}(z,w) &= c(\ell, r) zw.
\end{align*}

The distribution $\mu_{\ell, r}$ was computed in \cite{HW2016}*{Example 3.4} (also see the correction comment in \cite{HW2018}*{Comment 1}) in the case that $c(\ell, \ell) = 1 = c(r,r)$ and $c(\ell, r) = c$.  For completeness and to demonstrate the technology from Section \ref{subsec:add-trans}, we will provide a proof of the formula for the density of $\mu_{\ell, r}$ in the general case.

By equation (\ref{eq:reduced-partial-R}) we know that
\[
G_{X_\ell, X_r}(K_{X_\ell}(z), K_{X_r}(w)) = \frac{zw}{1-c(\ell, r)zw }.
\]
Recall $X_\ell$ and $X_r$ are semicircular operators with spectrum $[-2\sqrt{c(\ell, \ell)} ,2\sqrt{c(\ell, \ell)}]$ and $[-2\sqrt{c(r, r)}, 2\sqrt{c(r, r)}]$ respectively.  Hence the distribution $\mu_{\ell, r}$ is supported on the Cartesian product of these two intervals.  Since
\[
G_{X_\ell}(z) = 
\frac{z - \sqrt{z^2 - 4c(\ell,\ell)}}{2 c(\ell,\ell)}  \qand G_{X_r}(w) = 
\frac{w - \sqrt{w^2 - 4c(r,r)}}{2 c(r,r)} 
\]
for $\Im(z), \Im(w) > 0$, since $G_{X_r}(\overline{w}) = \overline{G_{X_r}(w)}$, and since equation (\ref{eq:cauchy-r-inversion}) enables us to compute $G_{(X_\ell, X_r)}(z,w)$ by replacing $z$ and $w$ with $G_{X_\ell}(z)$ and $G_{X_r}(w)$ respectively, we see by equation (\ref{eq:bi-cauchy-inversion}) with $a = c(\ell,\ell)$, $b = c(r,r)$, and $c = c(\ell, r)$  that for all $(x,y)$ in the support of $\mu_{\ell, r}$,
\begin{align*}
&f_{\ell,r}(x,y) \\
&= \frac{1}{\pi^2}   \Im\left( \frac{1}{2i}\left[ \frac{\left(\frac{x- i\sqrt{4 a - x^2}}{2a} \right)\left(\frac{y- i\sqrt{4 b - y^2}}{2b}\right) }{1 - c \left(\frac{x- i\sqrt{4 a - x^2}}{2a} \right) \left(\frac{y- i\sqrt{4 b - y^2}}{2b}   \right)} -  \frac{\left(\frac{x- i\sqrt{4 a - x^2}}{2a} \right)\left(\frac{y+ i\sqrt{4 b - y^2}}{2b}\right) }{1 - c \left(\frac{x- i\sqrt{4 a - x^2}}{2a} \right) \left(\frac{y+ i\sqrt{4 b - y^2}}{2b}   \right)}   \right] \right)\\
&= \frac{1}{2\pi^2}\left(\frac{ -\frac{xy}{4ab} + \frac{\sqrt{4a-x^2}\sqrt{4b-y^2}}{4ab} + \frac{c}{ab}}{\left(1 + \frac{c^2}{ab}\right) - \frac{c}{2 a b} xy + \frac{c}{2 a b} \sqrt{4a - x^2}\sqrt{4b - y^2}} - \frac{-\frac{xy}{4ab} -\frac{\sqrt{4a-x^2}\sqrt{4b-y^2}}{4ab} + \frac{c}{ab}    }{ \left(1 + \frac{c^2}{ab}\right) - \frac{c}{2 a b} xy - \frac{c}{2 a b} \sqrt{4a - x^2}\sqrt{4b - y^2} }              \right)\\
&= \frac{1}{4\pi^2ab} \left(\frac{ \left(1 - \frac{c^2}{ab}  \right)\sqrt{4a-x^2} \sqrt{4b-y^2}  }{\left(1- \frac{c^2}{ab}\right)^2 - \frac{c}{ab}\left(1 + \frac{c^2}{ab}\right) xy + \frac{c^2}{a^2 b^2} \left( bx^2 + ay^2\right)  }    \right).
\end{align*}
To simplify this expression, note if we let $\lambda_t = \sqrt{c(t,t)}$ for $t \in \{\ell, r\}$ and
\[
\lambda_{\ell,r} = \frac{c(\ell,r)}{\lambda_\ell \lambda_r},
\]
then
\[
f_{\ell,r}(x,y) = \frac{1}{4\pi^2 \lambda_\ell^2 \lambda_r^2} \left(\frac{ \left(1 - \lambda_{\ell, r}^2 \right)\sqrt{4\lambda_\ell^2-x^2} \sqrt{4\lambda^2_r-y^2}  }{\left(1- \lambda_{\ell, r}^2\right)^2 - \lambda_{\ell, r}\left(1 + \lambda_{\ell, r}^2\right) \left( \frac{x}{\lambda_\ell}\right) \left(\frac{y}{\lambda_r}\right)  + \lambda_{\ell, r}^2 \left(\left(\frac{x}{\lambda_\ell}   \right)^2 +\left( \frac{y}{\lambda_r}  \right)^2         \right)  }    \right).
\]
Therefore, since $X_\ell$ is a centred semicircular operator with variance $c(\ell, \ell) = \lambda_\ell^2$ so that
\[
f_{\ell}(x) = \frac{1}{2\pi \lambda_\ell^2} \sqrt{4 \lambda_\ell^2 - x^2}
\]
for all $x \in [-2\lambda_\ell, 2\lambda_\ell]$,  we obtain by equation (\ref{eq:transition-op-from-2-d-distribution}) that
\[
k_{\ell,r}(x, dy) = \frac{1}{2\pi \lambda_r^2} \frac{(1-\lambda_{\ell,r}^2) \sqrt{4 \lambda_r^2 - y^2} \, dy}{(1-\lambda_{\ell,r}^2)^2 - \lambda_{\ell,r}(1 + \lambda_{\ell,r}^2) \left(\frac{x}{\lambda_\ell}\right)\left(\frac{y}{\lambda_r}\right) + \lambda_{\ell,r}^2\left(  \left(\frac{x}{\lambda_\ell}\right)^2 + \left(\frac{y}{\lambda_r}\right)^2\right)}
\]
in agreement with \cite{BKS1997}*{Theorem 4.6}.

Note that although the above focused on the free Gaussian Markov processes, the same bi-free computations work for any non-commutative stochastic process consisting of semicircular operators.
\end{exam}

\begin{exam}\label{exam:poisson}
Recall the following example from \cite{VDN1992}.  Let $(\fM, \tau)$ be a tracial von Neumann algebra and let $I \mapsto P_I$ be a projection valued process; that is, this map is normal, projection valued, if $I, J \subseteq [0,1]$ are disjoint then $P_I P_J = 0$ and $P_I + P_J = P_{I\cup J}$, and $\tau(P_I) = |I|$ for all $I \subseteq [0,1]$ where $|I|$ denotes the Lebesgue measure of $I$.  If $S$ is a centred semicircular operator in $\fM$ of variance 1 and free from $\{P_I \, \mid \, I \subseteq [0,1]\}$, then $I \mapsto SP_IS$ is called a \emph{free Poisson process}.

For all $t \in [0,1]$ let $X_t = SP_{[0,t)} S$ which yields the self-adjoint non-commutative process $(X_t)_{t \in[0,1]}$.  Then
\[
\kappa_m(X_t) = t
\]
for all $t \in [0,1]$. Furthermore, for $\ell,r \in [0,1]$ with $\ell \leq r$, we obtain that
\[
\kappa_{n+m}(X_\ell, \ldots, X_\ell, X_r, \ldots, X_r) = \begin{cases}
\ell &\text{if } n > 0 \\
r & \text{if $n=0$ and $m > 0$}
\end{cases}
\]
since $X_r = X_\ell + SP_{[\ell, r)}S$ and since $SP_{[\ell, r)}S$ is freely independent from $X_\ell$ by \cite{NS1996}*{Theorem 1.6}.  Thus, equation (\ref{eq:bi-free-to-free-cumu}) implies that the ordered bi-free cumulants for the bi-free compound Poisson distribution with rate $\lambda = r$ and jump size $\nu = \frac{\ell}{r} \delta_{(1,0)} + \frac{r-\ell}{r}\delta_{(1,1)}$ where $\delta_{(x,y)}$ is the Kronecker delta measure at $(x,y)$ (see \cite{GHM2016}).   Thus the distribution of $(L(X_\ell), R(X_r))$ is the bi-free compound Poisson distribution and thus the transition operator from $X_r$ to $X_\ell$ can be computed.  Note by \cite{HHW2018}*{Corollary 7.6} that such distributions are not full. 
\end{exam}
 
Note the free Poisson process is an example of a self-adjoint non-commutative stochastic process with freely additive increments.  Using technology from bi-free probability, an approach to study such processes that differs from that in \cite{B1998}*{Theorem 3.1} is obtained:

\begin{thm}\label{thm:free-additive-increments-Cauchy-transform-formula}
Let $X$ and $Y$ be freely independent self-adjoint operators in a tracial von Neumann algebra $(\fM, \tau)$.  Then
\[
G_{L(X), R(X+Y)}(z,w) = - \frac{G_X(z) - G_{X+Y}(w)}{z - K_X(G_{X+Y}(w))}.
\]
Therefore, since equation (\ref{eq:free-sub}) allows one to compute $G_{X+Y}(w)$ based on the distributions of $X$ and $Y$, equation (\ref{eq:bi-cauchy-inversion}) enables one to compute the distribution of the pair $(L(X), R(X+Y))$ and thus the transition operator of $X+Y$ onto $X$.

In particular if $(X_t)_{t \in T}$ is a self-adjoint non-commutative stochastic process with freely additive increments, the above holds for $X = X_\ell$ and $Y = X_r - X_\ell$ for all $\ell < r$ and thus $G_{L(X_\ell), R(X_r)}$ is computable via the individual transformations of $X_\ell$ and $X_r$.
\end{thm}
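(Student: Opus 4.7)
The plan is to express the two-variable Green's function as a trace on $\fM$ and then apply the operator-valued subordination property of free additive convolution. First, because $L(X)$ and $R(X+Y)$ commute and both act on the tracial cyclic vector $\xi = 1_\fM$, we have
\[
G_{L(X), R(X+Y)}(z,w) = \langle (z - L(X))^{-1}(w - R(X+Y))^{-1}\xi, \xi\rangle = \tau((z-X)^{-1}(w - (X+Y))^{-1}).
\]

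The next step is the key ingredient: the Biane/Voiculescu operator-valued subordination formula for free convolution, which asserts that whenever $X$ and $Y$ are freely independent and self-adjoint,
\[
E_{W^*(X)}\!\left((w - (X+Y))^{-1}\right) = (\omega_X(w) - X)^{-1},
\]
where $\omega_X(w) = K_X(G_{X+Y}(w))$ is the subordination function from (\ref{eq:free-sub}). I would insert this identity into the trace (using the trace-preservation of $E_{W^*(X)}$ together with $(z-X)^{-1} \in W^*(X)$), yielding
\[
G_{L(X), R(X+Y)}(z,w) = \tau\bigl((z-X)^{-1}(\omega_X(w) - X)^{-1}\bigr).
\]

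From here the computation is routine: partial fractions give
\[
\frac{1}{(z-x)(\omega_X(w)-x)} = \frac{1}{\omega_X(w) - z}\left(\frac{1}{z-x} - \frac{1}{\omega_X(w) - x}\right),
\]
so applying $\tau$ and using $G_X(\omega_X(w)) = G_{X+Y}(w)$ (which follows from (\ref{eq:cauchy-r-inversion}) and the definition of $\omega_X$) produces
\[
G_{L(X), R(X+Y)}(z,w) = \frac{G_X(z) - G_{X+Y}(w)}{\omega_X(w) - z} = -\frac{G_X(z) - G_{X+Y}(w)}{z - K_X(G_{X+Y}(w))},
\]
as desired.

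The only nontrivial ingredient is the subordination identity for $E_{W^*(X)}$, which is a classical but nontrivial fact; if one wanted a self-contained argument, one could alternatively expand $(w - (X+Y))^{-1} = \sum_m w^{-m-1}(X+Y)^m$, use that the vanishing of mixed free cumulants between $X$ and $Y$ forces any $\tau((z-X)^{-1}(X+Y)^m)$ to collapse to a polynomial expression in $G_X(z)$ and the moments of $X+Y$, and then reassemble the series — but invoking the subordination identity is far cleaner. The last sentence of the theorem requires no further argument, since a process with freely additive increments has $X_r = X_\ell + (X_r - X_\ell)$ with $X_\ell$ and $X_r - X_\ell$ freely independent, so the formula applies verbatim with $X = X_\ell$ and $X+Y = X_r$.
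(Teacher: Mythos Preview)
Your proof is correct, but it takes a genuinely different route from the paper's. The paper argues entirely inside the bi-free framework: it shows via Theorem~\ref{thm:bi-free-from-free} that $(L(X),R(X))$ and $(L(0),R(Y))$ are bi-freely independent, uses additivity of the reduced bi-free partial $R$-transform together with vanishing of cumulants involving constants to get $\widetilde{R}_{L(X),R(X+Y)}=\widetilde{R}_{L(X),R(X)}$, computes the latter as $zw\,(R_X(z)-R_X(w))/(z-w)$ from the identification $\kappa_{n,m}(L(X),R(X))=\kappa_{n+m}(X)$, and then inverts via equation~(\ref{eq:reduced-partial-R}). You instead pass immediately to $\tau((z-X)^{-1}(w-(X+Y))^{-1})$, invoke Biane's operator-valued subordination identity $E_{W^*(X)}\bigl((w-(X+Y))^{-1}\bigr)=(\omega_X(w)-X)^{-1}$, and finish with a partial-fraction/resolvent identity.

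Both arguments are sound. Yours is shorter and makes transparent \emph{why} the formula is a difference quotient of Cauchy transforms---it is literally the resolvent identity applied after subordination---but it imports a nontrivial external fact (the conditional-expectation form of subordination, due to Biane and Voiculescu) and bypasses the bi-free machinery altogether. The paper's argument is longer but is the thematic point of the section: it shows that the bi-free partial $R$-transform computation \emph{is} the mechanism producing the transition formula, which is what justifies the paper's thesis that bi-free probability models transition operators. In effect, your proof recovers the same identity that Biane already had access to in \cite{B1998}, while the paper's proof exhibits it as a consequence of bi-free additivity.
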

\begin{proof}
Since $X$ and $Y$ are freely independent, Theorem \ref{thm:bi-free-from-free} implies that the pairs
\[
(L(X), R(X)) \qand (L(0), R(Y))
\]
are bi-freely independent with respect to $\tau$.   Hence equation (\ref{eq:bi-free-reduce-R-additive}) along with bi-free cumulants of constants vanishing (see \cite{CNS2015-1}*{Proposition 6.4.1}) imply that
\begin{align*}
\widetilde{R}_{L(X), R(X+Y)}(z,w) &= \widetilde{R}_{L(X), R(X)}(z,w) + \widetilde{R}_{L(0), R(Y)}(z,w) \\
&= \widetilde{R}_{L(X), R(X)}(z,w) + 0 \\
&= \sum_{n,m\geq 1} \kappa_{n,m}(L(X), R(X)) z^n w^m \\
&= \sum_{n,m\geq 1} \kappa_{n+m}(X) z^n w^m \\
&= \sum_{k \geq 2} \kappa_k(X)  zw\frac{z^{k-1} - w^{k-1}}{z-w} \\
&= zw \sum_{k \geq 1} \kappa_{k+1}(X) \frac{z^{k} - w^{k}}{z-w} \\
&= zw \frac{R_{X}(z) - R_{X}(w)}{z-w}.
\end{align*}
Alternatively, since $X$ and $Y$ are freely independent, for $n \geq 1$ and $m \geq 1$, 
\begin{align*}
\kappa_{n+m}(\underbrace{X, \ldots, X}_{n \text{ times}}, \underbrace{X+Y, \ldots X+Y}_{m \text{ times}}) &= \kappa_{n+m}(X, \ldots, X) + \Sigma
\end{align*}
where $\Sigma$ is a sum of free cumulants containing $n$ copies of $X$ and $m$ operators that are either $X$ or $Y$ with not all being $X$, and thus vanishes as $X$ and $Y$ are freely independent.  Therefore, by equation (\ref{eq:bi-free-to-free-cumu}), 
\[
\kappa_{n,m}(X,X+Y) = \begin{cases}
\kappa_{n+m}(X) & \text{if }n > 0 \\
\kappa_{n+m}(Y) & \text{if }n = 0
\end{cases}
\]
and the above reduced bi-free partial R-transform can be computed from this.  

Using equation (\ref{eq:reduced-partial-R}) we obtain that
\begin{align*}
-\frac{zw}{G_{L(X),R(X+Y)}\left( K_{X}(z), K_{X+Y}(w)\right)} &= zw \frac{R_{X}(z) - R_{X}(w)}{z-w} - 1 \\
&= zw\frac{(R_{X}(z) + \frac{1}{z} ) -   (R_{X}(w) + \frac{1}{w})}{z-w}
\end{align*}
so
\[
G_{L(X),R(X+Y)}\left( K_{X}(z), K_{X+Y}(w)\right) = - \frac{z-w}{K_{X}(z) -   K_X(w)}.
\]
Therefore equation (\ref{eq:cauchy-r-inversion}) yields the desired equation.
\end{proof}

Similar to  \cite{B1998}*{Theorem 3.1}, Theorem \ref{thm:free-additive-increments-Cauchy-transform-formula} can be used to compute the transition operators of many self-adjoint non-commutative stochastic processes:
\begin{exam}
The \emph{free Cauchy process} is the self-adjoint non-commutative stochastic process with freely additive increments where
\[
\mu_t(dx) = \frac{1}{\pi} \frac{t}{x^2 + t^2} \, dx
\]
for all $t \in [0, \infty)$. To compute the transition operators, note for all $t \in [0, \infty)$ that
\[
G_{X_t}(z) = \begin{cases}
\frac{1}{z + it} & \text{if }\Im(z) > 0 \\
\frac{1}{z-it} & \text{if } \Im(z) < 0
\end{cases}
\]
so that 
\[
K_{X_t}(z) = \begin{cases}
\frac{1}{z} + it & \text{if }\Im(z) > 0 \\
\frac{1}{z}-it & \text{if } \Im(z) < 0
\end{cases}.
\]
Thus if $l,r \in [0,\infty)$ are such that $l < r$, then for all $z,w \in \bC$ with $\Im(z) > 0$ and $\Im(w) > 0$ we have by Theorem \ref{thm:free-additive-increments-Cauchy-transform-formula} that
\begin{align*}
G_{L(X_\ell), R(X_r)}(z,w) &= - \frac{\frac{1}{z+i\ell} - \frac{1}{w+ir}}{z - \left(\frac{1}{\frac{1}{w + ir}} + i\ell\right)} = \frac{1}{z+i\ell}\frac{1}{w + ir},
\end{align*}
and for all $z,w \in \bC$ with $\Im(z) > 0$ and $\Im(w) < 0$ we have by Theorem \ref{thm:free-additive-increments-Cauchy-transform-formula} that
\begin{align*}
G_{L(X_\ell), R(X_r)}(z,w) &= - \frac{\frac{1}{z+i\ell} - \frac{1}{w-ir}}{z - \left(\frac{1}{\frac{1}{w - ir}} - i\ell\right)} \\
&= \frac{(z-w) + i(\ell + r)}{(z+i\ell)(w-ir)((z-w) + i(r-\ell))}.
\end{align*}
Hence equation (\ref{eq:bi-cauchy-inversion}) implies that 
\begin{align*}
f_{\ell,r}(x,y)
&= \frac{1}{\pi^2}   \Im\left(\frac{1}{2i} \frac{1}{x + i\ell} \left[\frac{1}{y + i r} - \frac{(x - y) + i (\ell+r)}{(y - ir)((x-y) + i (r-\ell)}  \right]     \right)\\
&= \frac{1}{\pi^2} \frac{\ell}{x^2 + \ell^2} \frac{r-\ell}{(x-y)^2 + (r-\ell)^2}.
\end{align*}
Therefore, since
\[
f_\ell(x) = \frac{1}{\pi} \frac{\ell}{x^2 + \ell^2},
\]
we obtain by equation (\ref{eq:transition-op-from-2-d-distribution}) that
\[
k_{\ell,r}(x, dy) = \frac{f_{\ell,r}(x,y)}{f_\ell(x)} \, dy = \frac{1}{\pi} \frac{r-\ell}{(x-y)^2 + (r-\ell)^2} dy
\]
in agreement with \cite{B1998}*{5.1}.
\end{exam}

One benefit of the bi-free approach to non-commutative stochastic processes over \cite{B1998} is the ability to understand a free additive convolution of non-commutative stochastic processes without freely additive increments.  This is accomplished via the following.

\begin{thm}\label{thm:free-convolution-self-adjoint-stochastic-processes}
Let $X_1, X_2, Y_1, Y_2$ be self-adjoint operators in a tracial von Neumann algebra $(\fM, \tau)$ such that $\alg(\{X_1, Y_1\})$ and $\alg(\{X_2, Y_2\})$ are freely independent.  Thus $G_{X_1 + X_2}(z)$ and $G_{Y_1 + Y_2}(w)$ can be computed via the process described after equation (\ref{eq:free-R}).  With
\[
\omega_{X_k}(z) = K_{X_k}(G_{X_1 + X_2}(z)) \qand \omega_{Y_k}(w) = K_{Y_k}(G_{Y_1 + Y_2}(w))
\]
for $k = 1,2$, we have
\begin{align*}
\frac{1}{G_{X_1+X_2, Y_1+Y_2}(z,w)} &+ \frac{1}{G_{X_1+X_2}(z) G_{Y_1+Y_2}(w)} \\
&= \frac{1}{G_{X_1,Y_1}(\omega_{X_1}(z), \omega_{Y_1}(w))} + \frac{1}{G_{X_2,Y_2}(\omega_{X_2}(z), \omega_{Y_2}(w))}.
\end{align*}
Thus equation (\ref{eq:bi-cauchy-inversion}) can be used to compute the transition operator of $Y_1 + Y_2$ onto $X_1 + X_2$.
\end{thm}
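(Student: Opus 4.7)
The plan is to upgrade the hypothesized free independence to a bi-free independence statement about the pairs $(L(X_k), R(Y_k))$ in the GNS representation, and then invoke equation (\ref{eq:bi-free-addivite-subordination}) directly, much as in the proof of Theorem \ref{thm:free-additive-increments-Cauchy-transform-formula}. Let $\xi = 1_\fM \in L_2(\fM, \tau)$ and, for each $k \in \{1, 2\}$, set
\[
A_{\ell, k} = L(\alg(\{X_k, Y_k\})) \qand A_{r, k} = R(\alg(\{X_k, Y_k\})).
\]
Since left multiplications commute with right multiplications, condition (1) of Theorem \ref{thm:bi-free-from-free} holds automatically. For condition (2), any element of $A_{r, k}$ is of the form $R(W)$ for some $W \in \alg(\{X_k, Y_k\})$, and one has $R(W)\xi = W = L(W)\xi$, so taking $S = L(W)$ works.

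Next, the hypothesized freeness of $\alg(\{X_1, Y_1\})$ and $\alg(\{X_2, Y_2\})$ in $(\fM, \tau)$ translates into freeness of the corresponding left-action algebras in $(\B(L_2(\fM,\tau)), \langle \cdot \xi, \xi\rangle)$, which in light of condition (2) is precisely the free independence required in Theorem \ref{thm:bi-free-from-free}. Consequently $(A_{\ell, 1}, A_{r, 1})$ and $(A_{\ell, 2}, A_{r, 2})$ are bi-freely independent, and in particular the commuting pairs of self-adjoint operators $(L(X_1), R(Y_1))$ and $(L(X_2), R(Y_2))$ are bi-free with respect to the vacuum state.

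The proof then concludes by applying equation (\ref{eq:bi-free-addivite-subordination}) to these two bi-free pairs. Since $L$ and $R$ are linear, $L(X_1) + L(X_2) = L(X_1 + X_2)$ and $R(Y_1) + R(Y_2) = R(Y_1 + Y_2)$, and the vector state pushes forward the distributions of left and right actions to the $\tau$-distributions of the underlying operators. Thus $G_{L(X_1)+L(X_2)} = G_{X_1+X_2}$, $G_{R(Y_1)+R(Y_2)} = G_{Y_1+Y_2}$, and since $L(T_1)$ and $R(T_2)$ commute one checks $G_{L(T_1), R(T_2)}(z,w) = G_{T_1, T_2}(z,w)$ by matching moments. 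The subordination functions $\omega_{X_k}(z) = K_{X_k}(G_{X_1+X_2}(z))$ and $\omega_{Y_k}(w) = K_{Y_k}(G_{Y_1+Y_2}(w))$ of the statement then coincide with the subordination functions produced by equation (\ref{eq:bi-free-addivite-subordination}) for the bi-free additive convolution of the pairs, yielding the claimed identity.

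The main obstacle is bookkeeping rather than substance: one must take care to apply Theorem \ref{thm:bi-free-from-free} to the \emph{joint} algebras generated by $X_k$ and $Y_k$ for each $k$ (rather than to any separate algebras for left and right variables), and one must correctly match each Green's function and subordination function appearing in equation (\ref{eq:bi-free-addivite-subordination}) with its counterpart on $\fM$ via the GNS identification. Once the bi-free independence of the two pairs is in hand, the conclusion follows immediately from the bi-free additive subordination formula.
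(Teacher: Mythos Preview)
Your proof is correct and follows essentially the same route as the paper: use Theorem~\ref{thm:bi-free-from-free} on the GNS representation (with left and right algebras $L(\alg(\{X_k,Y_k\}))$ and $R(\alg(\{X_k,Y_k\}))$) to upgrade free independence to bi-free independence of the pairs $(L(X_k),R(Y_k))$, and then apply equation~(\ref{eq:bi-free-addivite-subordination}). You supply more detail than the paper does---explicitly verifying conditions (1) and (2) of Theorem~\ref{thm:bi-free-from-free} and matching the Green's functions under the GNS identification---but the argument is the same.
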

\begin{proof}
Since $\alg(\{X_1, Y_1\})$ and $\alg(\{X_2, Y_2\})$ are freely independent, Theorem \ref{thm:bi-free-from-free} implies that
\begin{align*}
&(\alg(\{L(X_1), L(Y_1)\}), \alg(\{R(X_1), R(Y_1)\})) \qand \\
& (\alg(\{L(X_2), L(Y_2)\}), \alg(\{R(X_2), R(Y_2)\})) 
\end{align*}
are bi-freely independent with respect to $\tau$.   Hence
\[
(L(X_1),  R(Y_1)) \qand ( L(X_2) ,  R(Y_2))
\]
are bi-freely independent with respect to $\tau$.  Thus $(L(X_1 + X_2), R(Y_1, Y_2))$ is the additive bi-free convolution of $(L(X_1),  R(Y_1))$ and $( L(X_2) ,  R(Y_2))$ so equation (\ref{eq:bi-free-addivite-subordination}) applies thereby yielding the result.
\end{proof}

Theorem \ref{thm:free-convolution-self-adjoint-stochastic-processes} is particularly useful to compute the transition operators of a free additive convolution of two self-adjoint non-commutative stochastic processes where the transition operators are known, even if those non-commutative stochastic processes do not have freely additive increments.  Indeed suppose $(X_t)_{t \in T}$ is a self-adjoint non-commutative stochastic process in $(\fM, \tau_\fM)$ and $(Y_t)_{t \in T}$  is a self-adjoint non-commutative stochastic process in $(\fN, \tau_\fN)$.  By consider the reduced free product $(\fM \ast \fN, \tau_\fM \ast \tau_\fN)$, we can consider $(X_t)_{t \in T}$ and $(Y_t)_{t \in T}$ in the same tracial von Neumann algebra in such a way that $X_t$ and $Y_s$ are freely independent for all $t,s \in T$. The self-adjoint non-commutative stochastic process $(X_t + Y_t)_{t \in T}$ then falls under the purview of Theorem \ref{thm:free-convolution-self-adjoint-stochastic-processes} to compute the transition operators.  

As examples of the above are at least as complicated as computing the distributions of free additive convolutions, which often need computer assistance, we only provide the following example.

\begin{exam}
In the above exposition, suppose $X_t$ and $Y_t$ are centred semicircular operators for all $t \in T$.  For $\ell, r \in T$ with $\ell < r$, Example \ref{exam:bi-free-central-limit} yields the transition operator of $X_r$ onto $X_\ell$ (respectively $Y_r$ onto $Y_\ell$) where $c(t,t) = \tau_\fM(X_t^2)$ (respectively $c(t,t) = \tau_\fN(Y_t^2)$) for $t \in \{\ell, r\}$ and $c(\ell, r) = \tau_\fM(X_\ell X_r)$ (respectively $c(\ell, r) = \tau_\fN(Y_\ell Y_r)$).  As the (bi-)free additive convolution of such operators is of the same form, the transition operator of $X_r + Y_r$ onto $X_\ell + Y_\ell$ is given by the transition operators in Example \ref{exam:bi-free-central-limit} with $c(t,t) = \tau_\fM(X_t^2) + \tau_\fN(Y_t^2)$ for $t \in \{\ell, r\}$ and $c(\ell, r) = \tau_\fM(X_\ell X_r) + \tau_\fN(Y_\ell Y_r)$.
\end{exam}

Furthermore, the algebraic bi-free central limit theorem (\cite{V2014}*{Theorem 7.9}) immediately yield the following for limits of free additive convolutions of non-commutative stochastic processes.  In particular, the transition operators of such non-commutative stochastic processes `tend' to those from Example \ref{exam:bi-free-central-limit}.

\begin{thm}\label{thm:central}
Let $\{(X_{n,t})_{t \in T}\}_{n \in \bN}$ be a sequence of self-adjoint non-commutative stochastic processes in a tracial von Neumann algebra $(\fM, \tau)$ such that for all $\ell, r \in T$ with $\ell < r$, $\alg(\{X_{n,\ell}\}_{n \in \bN})$ and $\alg(\{X_{n,r}\}_{n \in \bN})$ are freely independent.  Suppose further that
\begin{enumerate}
\item $\tau(X_{n,t}) = 0$ for all $n \in \bN$ and $t \in T$,
\item $\sup_{n \in \bN} \left|\tau(X_{n,t_1} \cdots X_{n, t_k})\right| < \infty$ for all $\ell, r \in T$, for all $k \in \bN$, and for all $t_1, \ldots, t_k \in \{\ell, r\}$, and
\item $\lim_{N \to \infty} \frac{1}{N} \sum^N_{n=1} \tau(X_{n,\ell} X_{n,r}) = c(\ell, r)$ for all $\ell, r \in T$.
\end{enumerate}
By letting $S_{N, t} = \frac{1}{N} \sum^N_{n=1} X_{n,t}$ for all $t \in T$ and $N \in \bN$ and letting $E_{N,t} : \fM \to W^*(S_{N, t})$ denote the conditional expectation onto $W^*(S_{N, t})$, we have for all $\ell, r \in T$ with $\ell < r$ and $m \in \bN$ that
\[
\lim_{N \to \infty} \tau(E_{N,\ell}(S_{N,r}^m)) = \int_{\bR^2} y^m f_{\ell, r}(x,y) \, dx \, dy
\]
where $f_{\ell, r}$ is as in Example \ref{exam:bi-free-central-limit}.
\end{thm}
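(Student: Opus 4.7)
The plan is to reduce the statement to the bi-free central limit theorem of \cite{V2014}*{Theorem 7.9} applied to the sequence of pairs $(L(X_{n,\ell}), R(X_{n,r}))_{n \in \bN}$ on the GNS space $L_2(\fM, \tau)$, and then extract the stated identity using trace-preservation of the conditional expectation.

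The first step is to verify bi-free independence of these pairs. For each $n$, the operators $L(X_{n,\ell})$ and $R(X_{n,r})$ commute, and the cyclic vector $\xi = 1_\fM$ satisfies $R(T)\xi = T = L(T)\xi$ for every $T \in \fM$, so Theorem \ref{thm:bi-free-from-free} applies. Reading the free-independence hypothesis in the natural sense of the free-additive-convolution setup preceding the theorem (that the individual processes $(X_{n,t})_{t \in T}$ are freely independent across $n$), it follows that the pairs $(L(X_{n,\ell}), R(X_{n,r}))$ are bi-freely independent as $n$ varies.

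The second step is to track the bi-free cumulants of $(L(S_{N,\ell}), R(S_{N,r}))$ under the central-limit rescaling. Multi-linearity of bi-free cumulants together with the vanishing of mixed cumulants for bi-freely independent pairs gives
\[
\kappa_{j,k}(S_{N,\ell}, S_{N,r}) = N^{-(j+k)/2} \sum_{n=1}^{N} \kappa_{j,k}(X_{n,\ell}, X_{n,r}).
\]
Assumption (1) kills the $(1,0)$ and $(0,1)$ cumulants. Assumption (3), applied with the pair of times $(\ell,\ell)$, $(\ell,r)$, and $(r,r)$, identifies the limits of the $(2,0)$, $(1,1)$, and $(0,2)$ cumulants with $c(\ell,\ell)$, $c(\ell,r)$, and $c(r,r)$ respectively. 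Assumption (2), combined with the expression of each $\kappa_{j,k}(X_{n,\ell}, X_{n,r})$ as a universal polynomial in the joint moments via the bi-non-crossing moment-cumulant inversion, yields a uniform-in-$n$ bound on $|\kappa_{j,k}(X_{n,\ell}, X_{n,r})|$; for $j + k \geq 3$ the factor $N^{1-(j+k)/2}$ then drives the cumulant to zero.

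The third step matches the limiting cumulants with those of a centered self-adjoint bi-free central limit pair with variances $c(\ell,\ell)$, $c(r,r)$ and covariance $c(\ell,r)$, which is precisely the pair described in Example \ref{exam:bi-free-central-limit} with density $f_{\ell,r}$. Moment-cumulant duality then promotes cumulant convergence to joint moment convergence,
\[
\tau(S_{N,\ell}^j S_{N,r}^k) \longrightarrow \int_{\bR^2} x^j y^k f_{\ell,r}(x,y)\, dx\, dy,
\]
for all $j, k \geq 0$. The desired identity is the $j = 0$ case combined with the trace-preservation identity $\tau(E_{N,\ell}(S_{N,r}^m)) = \tau(S_{N,r}^m)$. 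The main technical obstacle is the uniform-in-$n$ control of the higher bi-free cumulants coming from hypothesis (2); this is combinatorially routine but requires careful bookkeeping in the bi-non-crossing partition lattice.
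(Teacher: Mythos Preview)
Your proposal is correct and matches the paper's approach: the paper gives no proof beyond the one-line remark preceding the statement that the result follows from the algebraic bi-free central limit theorem \cite{V2014}*{Theorem~7.9}, and your argument is precisely the detailed unpacking of that reduction via Theorem~\ref{thm:bi-free-from-free}. Your reading of the free-independence hypothesis (free across $n$, as in the free-additive-convolution discussion immediately before the theorem) and of the normalization ($N^{-1/2}$ rather than the stated $N^{-1}$) are both the intended ones.
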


Of course, a stronger analytic version of the bi-free central limit theorem could immediately yield stronger information about the transition operators of such processes.

\section{Unitary Stochastic Processes and Bi-Free Probability}
\label{sec:unitary}

The same ideas used in the previous section  can be transferred to describe the transition operators for unitary operators in von Neumann algebra using bi-free probability theory and the S-transforms.  We begin with the following.

\begin{thm}\label{thm:free-multiplicative-increments-Cauchy-transform-formula}
Let $U$ and $V$ be freely independent unitary operators in a tracial von Neumann algebra $(\fM, \tau)$ such that $\tau(U) \neq 0$ and $\tau(V) \neq 0$.  Then
\[
H_{L(U),R(VU)}(z,w) = 1 + \frac{z \psi_U(z) - \psi^{-1}_U(\psi_{VU}(w)) \psi_{VU}(w)}{z-  \psi^{-1}_U(\psi_{VU}(w))}.
\]
Therefore, since equations (\ref{eq:free-S-inv-1}), (\ref{eq:free-S-inv-2}), and (\ref{eq:free-S-inv-3}) allows one to compute $\psi_{VU}$ based on the distributions of $U$ and $V$, it is possible to compute $\psi_{L(U), R(VU)}$, $\psi_U$, and $\psi_{VU}$.  Thus $g_{L(U),R(VU)}$ is computable by equation (\ref{eq:g-for-U-inversion}) so the distribution of $(L(U), R(VU))$ and the transition operator of $VU$ onto $U$ are computable via equation (\ref{eq:poisson}).

In particular if $(U_t)_{t \in T}$ is a unitary non-commutative stochastic process with (left) multiplicatively free increments, the above holds for $U = U_\ell$ and $V = U_r U_\ell^{-1}$ for all $\ell < r$  and thus $H_{L(U_\ell), R(U_r)}$ is computable via the individual transformations of $U_\ell$ and $U_r$.
\end{thm}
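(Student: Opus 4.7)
The plan is to adapt the additive argument of Theorem \ref{thm:free-additive-increments-Cauchy-transform-formula} to the multiplicative setting, with the opposite bi-free partial S-transform $S^{\mathrm{op}}$ playing the role of the reduced R-transform. The central observation is that $(L(U), R(VU))$ arises as a bi-free multiplicative convolution of the ``diagonal'' pair $(L(U), R(U))$ with the ``trivial-left'' pair $(L(1), R(V))$: recalling $R(a)R(b) = R(ba)$ for the right regular representation, taking $(L(1), R(V))$ as the first pair and $(L(U), R(U))$ as the second in (\ref{eq:op-S}) produces left operator $L(1) \cdot L(U) = L(U)$ and right operator $R(U) \cdot R(V) = R(VU)$, exactly the pair whose $H$-transform we wish to compute.

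Since $U$ and $V$ are freely independent in $\fM$, I would establish that the pairs $(L(1), R(V))$ and $(L(U), R(U))$ are bi-freely independent with respect to $\tau$ via Theorem \ref{thm:bi-free-from-free}, essentially as in the proof of Theorem \ref{thm:free-additive-increments-Cauchy-transform-formula} (using the fact that bi-free independence passes to sub-pairs of unital $*$-subalgebras to reduce to pairs for which the hypotheses are transparent). Next, because bi-free cumulants with a constant entry vanish by \cite{CNS2015-1}*{Proposition 6.4.1}, we have $K_{L(1), R(V)}(z,w) = 0$, so equation (\ref{eq:op-S-transform-via-cummulants}) gives $S^{\mathrm{op}}_{L(1), R(V)}(z,w) = 1$. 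Combined with (\ref{eq:op-S}), this yields
\[
S^{\mathrm{op}}_{L(U), R(VU)}(z,w) = S^{\mathrm{op}}_{L(U), R(U)}(z,w).
\]
I would then compute $H_{L(U), R(U)}$ directly from (\ref{eq:H-definition}): the joint distribution of $(L(U), R(U))$ is supported on the diagonal of $\bT^2$, so a partial fractions decomposition together with $\int_\bT (1 - zs)^{-1}\, d\mu_U(s) = 1 + \psi_U(z)$ gives
\[
H_{L(U), R(U)}(z,w) = 1 + \frac{z\psi_U(z) - w\psi_U(w)}{z - w}.
\]

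To finish, I would convert the $S^{\mathrm{op}}$ identity above into the desired formula via (\ref{eq:op-S-via-H}). Writing out both sides (noting the left marginal of both pairs is $U$, while the right marginals are $VU$ and $U$ respectively), cancelling the shared prefactor $\frac{w(z+1)}{z(w+1)}$, and using injectivity of the Möbius transformation $h \mapsto \frac{h-(w+1)}{h-(z+1)}$ in $h$, one obtains
\[
H_{L(U), R(VU)}(\psi^{-1}_U(z), \psi^{-1}_{VU}(w)) = H_{L(U), R(U)}(\psi^{-1}_U(z), \psi^{-1}_U(w)).
\]
Substituting $z \mapsto \psi_U(z)$, $w \mapsto \psi_{VU}(w)$ and invoking the closed form above (with $\psi_U(\psi^{-1}_U(\psi_{VU}(w))) = \psi_{VU}(w)$) yields precisely the stated identity. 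The main conceptual obstacle is identifying the correct decomposition at the outset, since the ordering convention in (\ref{eq:op-S}) coupled with $R(a)R(b) = R(ba)$ forces a careful choice of which pair appears first; once this is set up, the remainder is a mechanical unfolding of the bi-free multiplicative transforms.
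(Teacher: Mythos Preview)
Your proposal is correct and follows essentially the same route as the paper: establish bi-free independence via Theorem~\ref{thm:bi-free-from-free}, use the vanishing of bi-free cumulants with a constant entry to obtain $S^{\mathrm{op}}_{L(1),R(V)}\equiv 1$ and hence $S^{\mathrm{op}}_{L(U),R(VU)}=S^{\mathrm{op}}_{L(U),R(U)}$, pass through equation~(\ref{eq:op-S-via-H}) to the $H$-identity, and finish with the explicit diagonal computation of $H_{L(U),R(U)}$. The only cosmetic difference is that the paper phrases the bi-free independence as that of $(L(U),R(U))$ and $(L(V),R(V))$ (with $(L(1),R(V))$ then a sub-pair), and it suppresses the M\"obius-injectivity remark you make explicit.
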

\begin{proof}
Since $U$ and $V$ are freely independent, Theorem \ref{thm:bi-free-from-free} implies that the pairs
\[
(L(U), R(U)) \qand ( L(V),  R(V))
\]
are bi-freely independent with respect to $\tau$.  Hence equation (\ref{eq:op-S}) implies that 
\begin{align*}
S^{\mathrm{op}}_{L(U),R(VU)}(z,w) &= S^{\mathrm{op}}_{L(1)L(U), R(U)R(V)}(z,w)  \\
&= S^{\mathrm{op}}_{L(1), R(V)}(z,w)  S^{\mathrm{op}}_{L(U), R(U)}(z,w) 
\end{align*}
However, by equation (\ref{eq:op-S-transform-via-cummulants}) together with the fact that  bi-free cumulants of constants vanishing (see \cite{CNS2015-1}*{Proposition 6.4.1}), we obtain that
\[
S^{\mathrm{op}}_{L(1), R(V)}(z,w)  = 1
\]
so that
\begin{align*}
S^{\mathrm{op}}_{L(U),R(VU)}(z,w) &= S^{\mathrm{op}}_{L(U), R(U)}(z,w).
\end{align*}
Hence equation (\ref{eq:op-S-via-H}) implies that
\[
H_{L(U),R(VU)}(\psi^{-1}_U(z), \psi^{-1}_{VU}(w)) = H_{L(U),R(U)}(\psi^{-1}_U(z), \psi^{-1}_{U}(w))
\]
and thus
\[
H_{L(U),R(VU)}(z,w) = H_{L(U),R(U)}(z, \psi^{-1}_{U}(\psi_{VU}(w))).
\]
Note if $\mu_U$ is the distribution of $U$, then distribution of $(L(U), R(U))$ is supported on
\[
D = \{(s,s) \, \mid \, s \in \bT\}
\]
with $d\mu_{L(U),R(U))}(s,s) = d \mu_U(s)$.  Hence equations (\ref{eq:psi}) and (\ref{eq:H-definition}) imply that
\begin{align*}
H_{(U,U)}(z,w) &= \int_{\bT} \frac{1}{(1-zs)(1-ws)} d\mu_U(s) \\
&= \int_\bT 1 + \frac{z^2s}{(z-w)(1-zs)} - \frac{w^2s}{(z-w)(1-ws)} \, d\mu_U(s) \\
&= 1 + \frac{z \psi_U(z) - w \psi_U(w)}{z-w}
\end{align*}
so the result follows.
\end{proof}

\begin{exam}
In \cite{B1998}*{5.2}, a unitary multiplicative analogue of the free Levy process $(U_t)_{t \geq 0}$ was examined where
\[
\mu_t(ds) = \frac{1 - e^{-2t}}{|s- e^{-t}|^2}  \, ds
\]
and thus
\[
\psi_{U_t}(z) = \frac{z}{e^t-z}
\]
for all $z \in \bC$ with $|z| < 1$.  Thus if $l,r \in [0,\infty)$ are such that $l < r$, then for all $z,w \in \bC \setminus \bT$ we have by Theorem \ref{thm:free-multiplicative-increments-Cauchy-transform-formula} that
\begin{align*}
H_{L(U_\ell), R(U_r)}(z,w) &= 1 + \frac{z \psi_{U_\ell}(z) - e^{\ell-r} w \psi_{U_r}(w)}{z-e^{\ell-r} w}.
\end{align*}
Note that
\[
\psi_{U_\ell}\left(\frac{1}{\overline{z}}\right) = -1 - \overline{\psi_{U_\ell}(z)}.
\]
Therefore, since
\[
g_{L(U_\ell), R(U_r)}(z,w) = 4 H_{L(U_\ell), R(U_r)}(z,w) - 2 \psi_{U_\ell}(z) - 2 \psi_{U_r}(w) - 3,
\]
equation (\ref{eq:bi-free-poisson}) implies for all $s,t \in \bT$ that the density of $d\mu\left(\frac{1}{s}, \frac{1}{t}\right)$ is given by
\begin{align*}
&2 \Re\left(\frac{s \psi_{U_\ell}(s) - e^{\ell-r} t \psi_{U_r}(t)}{s - e^{\ell-r} t} - \frac{-s -s \overline{\psi_{U_\ell}(s)}  - e^{\ell-r}  t \psi_{U_r}\left(t\right)}{s - e^{\ell-r} t} \right) - \Re\left( \psi_{U_\ell}(s)+ \overline{\psi_{U_\ell}(s)} + 1\right)\\
&= \Re\left(\frac{2s}{s-e^{\ell-r} t}     -   1 \right) \left( \psi_{U_\ell}(s)+ \overline{\psi_{U_\ell}(s)} + 1\right)\\
&= \Re\left(\frac{(s+e^{\ell-r}t)(\overline{s} - e^{\ell-r} \overline{t}) }{|s-e^{\ell-r} t|^2}\right) \Re\left( 2\psi_{U_\ell}(s)+ 1\right)\\
&= \frac{1-e^{2(\ell-r)}}{|s - e^{\ell-r} t|^2} \frac{e^{2\ell} - 1}{|e^\ell-s|^2}.
\end{align*}
Thus, by density of $d\mu(s,t)$ is given by
\begin{align*}
\frac{1-e^{2(\ell-r)}}{|\frac{1}{s} - e^{\ell-r} \frac{1}{t}|^2} \frac{e^{2\ell} - 1}{|e^\ell-\frac{1}{s}|^2}&=\frac{1-e^{2(\ell-r)}}{|s^{-1}t - e^{\ell-r} |^2} \frac{1-e^{-2\ell} }{|e^{-\ell}-s|^2}.
\end{align*}
Thus, by dividing by the density of $\mu_\ell$, we obtain that
\[
k_{\ell, r}(s,dt) = \frac{1-e^{2(\ell-r)}}{|s^{-1}t - e^{\ell-r} |^2}  dt,
\]
which agrees with \cite{B1998}*{5.2}.
\end{exam}

Again one benefit of the bi-free approach to non-commutative stochastic processes over \cite{B1998} is the ability to understand a free multiplicative convolution of non-commutative stochastic processes without freely multiplicative increments.  This is accomplished via the following.

\begin{thm}\label{thm:free-convolution-unitary-stochastic-processes}
Let $U_1, U_2, V_1, V_2$ be unitary operators with non-vanishing first moments in a tracial von Neumann algebra $(\fM, \tau)$ such that $*\text{-}\alg(\{U_1, V_1\})$ and $*\text{-}\alg(\{U_2, V_2\})$ are freely independent so $\psi_{U_1 U_2}(z)$ and $\psi_{V_1V_2}(w)$ can be computed via the process described after equation (\ref{eq:free-S-inv-3}).  The pair 
\[
(L(U_1U_2), R(V_1V_2)) = (L(U_1)L(U_2), R(V_2)R(V_1))
\]
is the opposite bi-free multiplicative convolution of $(L(U_1), R(V_1))$ and $(L(U_2), R(V_2))$.  Therefore $\psi_{U_1 U_2, V_1 V_2}(z,w)$ can be computed via the process described after equation (\ref{eq:op-S}).  Hence equations (\ref{eq:bi-free-poisson}) and (\ref{eq:transition-op-from-2-d-distribution}) can be used to compute the transition operator of $V_1V_2$ onto $U_1U_2$.
\end{thm}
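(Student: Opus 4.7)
The plan is to mirror the proof of Theorem~\ref{thm:free-convolution-self-adjoint-stochastic-processes}, substituting the opposite bi-free multiplicative machinery for the additive one. The goal is to promote the assumed free independence of $*\text{-}\alg(\{U_1,V_1\})$ and $*\text{-}\alg(\{U_2,V_2\})$ in $(\fM,\tau)$ to bi-free independence of the two pairs $(L(U_1), R(V_1))$ and $(L(U_2), R(V_2))$ acting on $L_2(\fM,\tau)$, and then apply the product formula~(\ref{eq:op-S}) for the opposite bi-free partial $S$-transform.

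First I would fix the vacuum vector $\xi = 1_\fM \in L_2(\fM,\tau)$ and apply Theorem~\ref{thm:bi-free-from-free} to the pairs
\[
\bigl(L(*\text{-}\alg(\{U_k,V_k\})),\, R(*\text{-}\alg(\{U_k,V_k\}))\bigr), \qquad k \in \{1,2\}.
\]
Both hypotheses are immediate: left and right multiplication operators on $L_2(\fM,\tau)$ always commute, and $L(S)\xi = S = R(S)\xi$ for every $S \in \fM$. Since $L(*\text{-}\alg(\{U_1,V_1\}))$ and $L(*\text{-}\alg(\{U_2,V_2\}))$ are freely independent with respect to the vector state $\tau$ (being isomorphic copies of the given freely independent pair), Theorem~\ref{thm:bi-free-from-free} yields bi-free independence of the two pairs above, and in particular of $(L(U_1), R(V_1))$ and $(L(U_2), R(V_2))$.

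Next, using $L(U_1)L(U_2) = L(U_1 U_2)$ together with the order-reversing identity $R(V_2)R(V_1) = R(V_1 V_2)$, I would rewrite the opposite bi-free multiplicative convolution of the two bi-free pairs as $(L(U_1 U_2),\, R(V_1 V_2))$. Equation~(\ref{eq:op-S}) then gives
\[
S^{\mathrm{op}}_{L(U_1 U_2),\, R(V_1 V_2)}(z,w) = S^{\mathrm{op}}_{L(U_1), R(V_1)}(z,w)\, S^{\mathrm{op}}_{L(U_2), R(V_2)}(z,w),
\]
while the univariate multiplicativity~(\ref{eq:free-S-inv-3}) simultaneously recovers $\psi_{U_1 U_2}$ and $\psi_{V_1 V_2}$. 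Inversion of~(\ref{eq:op-S-via-H}) then produces $H_{L(U_1 U_2),\, R(V_1 V_2)}$, followed by $\psi_{L(U_1 U_2),\, R(V_1 V_2)}$ via~(\ref{eq:H-definition}) and $g_{L(U_1 U_2),\, R(V_1 V_2)}$ via~(\ref{eq:g-for-U-inversion}); equations~(\ref{eq:bi-free-poisson}) and~(\ref{eq:transition-op-from-2-d-distribution}) at last extract $\mu_{U_1 U_2,\, V_1 V_2}$ and the transition operator of $V_1 V_2$ onto $U_1 U_2$.

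The argument is essentially bookkeeping, and the main pitfall I anticipate is keeping the order reversal of right actions straight when invoking~(\ref{eq:op-S})---that is precisely what forces $V_1 V_2$ rather than $V_2 V_1$ to appear on the right. The non-vanishing first moment hypotheses combined with free independence give $\tau(U_1 U_2) = \tau(U_1)\tau(U_2) \neq 0$ and $\tau(V_1 V_2) = \tau(V_1)\tau(V_2) \neq 0$, so every $\psi$-, $\eta$-, and $S$-transform appearing is invertible in a neighbourhood of the origin and the subsequent analytic inversions are legitimate.
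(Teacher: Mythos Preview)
Your proposal is correct and follows essentially the same route as the paper: invoke Theorem~\ref{thm:bi-free-from-free} on $L_2(\fM,\tau)$ to upgrade the free independence of $*\text{-}\alg(\{U_1,V_1\})$ and $*\text{-}\alg(\{U_2,V_2\})$ to bi-free independence of $(L(U_1),R(V_1))$ and $(L(U_2),R(V_2))$, then read off the opposite bi-free multiplicative convolution from $L(U_1)L(U_2)=L(U_1U_2)$ and $R(V_2)R(V_1)=R(V_1V_2)$. Your write-up is in fact more explicit than the paper's in checking the hypotheses of Theorem~\ref{thm:bi-free-from-free} and in noting that $\tau(U_1U_2)=\tau(U_1)\tau(U_2)\neq 0$ (and similarly for $V_1V_2$), which legitimizes the transform inversions.
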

\begin{proof}
Since  $\{U_1, V_1\}$ and $\{U_2, V_2\}$ are freely independent, Theorem \ref{thm:bi-free-from-free} implies that
\begin{align*}
&(*\text{-}\alg(\{L(U_1), L(V_1)\}), *\text{-}\alg(\{R(U_1), R(V_1)\})) \qand \\
& (*\text{-}\alg(\{L(U_2), L(V_2)\}), *\text{-}\alg(\{R(U_2), R(V_2)\})) 
\end{align*}
are bi-freely independent with respect to $\tau$.   Hence
\[
( L(U_1),   R(V_1)) \qand ( L(U_2),   R(V_2)) 
\]
are bi-freely independent with respect to $\tau$.  Thus 
\[
(L(U_1U_2), R(V_1V_2)) = (L(U_1)L(U_2), R(V_2)R(V_1))
\]
is the opposite bi-free multiplicative convolution of $(L(U_1), R(V_1))$ and $(L(U_2), R(V_2))$ thereby yielding the result.
\end{proof}

\end{document}